\newtheorem{theorem}{Theorem}[section]
\newtheorem{lemma}[theorem]{Lemma}
\newtheorem{prop}[theorem]{Proposition}
\newtheorem{cor}[theorem]{Corollary}
\theoremstyle{definition}
\newtheorem{example}[theorem]{Example}
\theoremstyle{remark}
\newtheorem{remark}[theorem]{Remark}
\numberwithin{equation}{section}
\DeclareMathOperator{\Supp}{Supp}
\begin{document}

\title{Gaps in Taylor Series of Algebraic Functions}

\author{Seth Dutter}
\address{University of Wisconsin - Stout\\
Menomonie, WI}
\curraddr{}
\email{dutters@uwstout.edu}
\thanks{}

\date{October 24, 2014}

\subjclass[2010]{Primary 14H05; Secondary 11R58}

\keywords{algebraic functions, function fields, Taylor series}

\begin{abstract}
Let $f$ be a rational function on an algebraic curve over the complex numbers. For a point $p$ and local parameter $x$ we can consider the Taylor series for $f$ in the variable $x$. In this paper we give an upper bound on the frequency with which the terms in the Taylor series have $0$ as their coefficient.
\end{abstract}

\maketitle

\section{Introduction}\label{section.intro}

Let $f$ be an algebraic function over the complex numbers and suppose that in local coordinates $f$ can be represented by a Taylor series of the form
\[
f = \sum_{n=0}^\infty \alpha_n x^{a_n},
\]
where the $a_n$ are a strictly increasing sequence of non-negative integers and the $\alpha_n$ are nonzero complex numbers. Our goal is to understand the possible gaps of the $a_n$.

If $f=p(x)/q(x)$ is rational, the gap sequences are fully understood. For sufficiently large $n$ the coefficients of the Taylor series of $f$ satisfy a linear homogeneous recurrence relation of degree $\deg(q(x))$. Consequently, the length of the gaps in the Taylor series are eventually bounded by $\deg(q(x))-1$. An even stronger conclusion is given by the Skolem-Mahler-Lech Theorem \cite{mahler56}, which states that the zero terms of the Taylor series of a rational function consist of finitely many arithmetic progressions plus a finite set. Therefore $\lim_{n\rightarrow\infty}a_n/n$ exists and is effectively bounded.

In \cite{Christol1980}, Christol, Kamae, Mend\`{e}s France, and Rauzy prove an equivalency between formal algebraic power series over finite fields and automatic sequences. An overview of this theorem and some of its generalizations can be found in \cite{denef87}. Less has been established for algebraic functions over the complex numbers, but there are some classic results from the field of complex analysis. For instance, the Fabry gap theorem \cite{fabry96} states that if
\[
\lim_{n\rightarrow\infty} \frac{a_n}{n} = \infty
\]
then $f$ cannot be analytically continued to any point beyond the domain of convergence. In particular $f$ is not algebraic.

We will show that for any algebraic function $\limsup_{n\rightarrow \infty} a_n/n$ is effectively bounded by the number of distinct poles of $f$ plus a term corresponding to the choice of local coordinates. This is a consequence of Theorem \ref{theorem.main} which provides a bound for each $a_n$ in terms of similar data.

\section{Main Theorem}

Throughout this section we will let $C$ be a smooth complete algebraic curve of genus $g$ over $\mathbb{C}$ and $K$ be the function field of $C$. At each point $q\in C$ we will denote by $v_q$ the unique surjective discrete valuation at $q$, $v_q: K^\times\rightarrow \mathbb{Z}$. Recall that for $f\in K^\times$ the height of $f$ is defined to be
\[
h(f) = -\sum_{q\in C}\min\{v_q(f), 0\}
\]
and has the property that $v_q(f) \leq h(f)$ for any $q\in C$. We now state our main theorem.

\begin{theorem}[Main Theorem]\label{theorem.main}
Let $f\in K$ be a rational function and $x\in K$ a local parameter at $p\in C$ such that $f\notin \mathbb{C}[x]$ and $v_p(f) = 0$. In the completion of the local ring at $p$ we can uniquely write
\[
f = \sum_{n=0}^\infty \alpha_n x^{a_n},
\]
where $\alpha_n \neq 0$ for all $n$ and $\{a_n\}$ is a sequence of strictly increasing integers with $a_0 = 0$. Then for all positive integers $n$
\begin{equation}\label{inequality.main}
a_n \leq h(f)+(n-1)\left(\#S_1 + \sum_{q\in S_2}\left(v_q\left(\frac{dx}{dx_q}\right)+1\right)\right),
\end{equation}
where
\begin{align*}
S_1 &= \{q\in C : v_q(f) < 0 \text{ and } v_q(dx/dx_q) = 0\},\\
S_2 &= \{q\in C : v_q(x)=0\text{ and } v_q(dx/dx_q) > 0\},
\end{align*}
and $x_q$ is a local parameter at $q$.
\end{theorem}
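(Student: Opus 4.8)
The plan is to exhibit, for each $n\ge 1$, a function $g\in K$ with $v_p(g)=a_n$ whose pole divisor is controlled, and then to invoke the height inequality $a_n=v_p(g)\le h(g)$. The device that makes the pole divisor tractable is the Euler operator $\theta=x\,\frac{d}{dx}$. Since $\mathbb{C}$ has characteristic zero and $x$ is nonconstant, $x$ is a separating element of $K$, so $d/dx$ is a well-defined derivation of $K$ and $\theta$ maps $K$ into itself. On the completed local ring $\widehat{\mathcal O}_p\cong\mathbb{C}[[x]]$ the operator $\theta$ acts as $x^a\mapsto a\,x^a$, so that $\theta^j f=\sum_k \alpha_k a_k^{\,j}\,x^{a_k}$; in particular $\theta$ preserves the set of exponents appearing in $f$.

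First I would construct $g$. Let $\tilde P(t)=\prod_{k=1}^{n-1}(t-a_k)$, a polynomial of degree $n-1$, and set $u=\tilde P(\theta)f=\sum_k \alpha_k \tilde P(a_k)\,x^{a_k}$. By construction $\tilde P(a_k)=0$ for $1\le k\le n-1$, while $\tilde P(a_0)=\tilde P(0)\ne 0$ and $\tilde P(a_k)\ne 0$ for $k\ge n$; hence $u=\beta_0+\sum_{k\ge n}\alpha_k\tilde P(a_k)\,x^{a_k}$ with $\beta_0=\alpha_0\tilde P(0)$. Setting $g=u-\beta_0\in K$ removes the constant term without touching the higher ones, so $v_p(g)=a_n$. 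The key economy here is that the term $x^{a_0}=1$ is disposed of by subtracting the scalar $\beta_0$ rather than by applying $\theta$ an $n$-th time; this is precisely what will produce the factor $(n-1)$ rather than $n$. Since $g$ and $u$ differ by a constant they have the same poles, so it remains to bound $h(u)=\sum_{q}\max(-v_q(u),0)$.

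The heart of the argument is a local estimate for how $\theta$ changes pole orders, organized by the value of $v_q(x)$. Writing $e_q=v_q(dx/dx_q)$ and using $v_q(dh/dx)=v_q(dh/dx_q)-e_q$ together with $v_q(dh/dx_q)\ge v_q(h)-1$ (with no loss when $h$ is regular), one checks three cases. If $v_q(x)\ne 0$ — that is, $q$ is a zero or a pole of $x$ — then $e_q=v_q(x)-1$ and the factor of $x$ in $\theta$ exactly compensates the derivative, so $\theta$ never increases the pole order and such $q$ contribute at most $\max(-v_q(f),0)$ to $h(u)$. If $v_q(x)=0$ and $e_q=0$, then $\theta$ leaves a regular $h$ regular and increases the order of an existing pole by exactly $1$; thus only the points of $S_1$ matter, each contributing at most $n-1$ beyond $\max(-v_q(f),0)$. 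If $v_q(x)=0$ and $e_q>0$, i.e.\ $q\in S_2$, then each application of $\theta$ raises the pole order by at most $1+e_q$. Applying $\tilde P(\theta)$ (degree $n-1$) and summing these bounds over all $q$, the regular points and the zeros and poles of $x$ reassemble into $h(f)=\sum_q\max(-v_q(f),0)$, leaving the stated penalty $(n-1)\big(\#S_1+\sum_{q\in S_2}(e_q+1)\big)$. Combining this with $a_n=v_p(g)\le h(g)=h(u)$ yields \eqref{inequality.main}.

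I expect the main obstacle to be the third case, $q\in S_2$: one must verify uniformly — including the transitional step where a regular function first acquires a pole — that a single application of $\theta$ costs at most $1+e_q$ in pole order, and that these local costs can be summed without disturbing the reconstruction of $h(f)$. The case $n=1$ serves as a useful consistency check: there $\tilde P$ is the empty product, $u=f$, and $g=f-\alpha_0$, recovering the bound $a_1\le h(f)$ directly from the height inequality.
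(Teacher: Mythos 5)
Your argument is correct, and its overall architecture is the same as the paper's: produce an auxiliary function lying in the span of $1, f, xf', \dots, x^{n-1}f^{(n-1)}$ that vanishes to order exactly $a_n$ at $p$, then bound its height by a pointwise case analysis on $v_q(x)$ and $v_q(dx/dx_q)$. Your three local cases are precisely the paper's $S_4$, $S_1\cup S_3$, and $S_2$, and your per-application estimate $v_q(\theta h)\ge v_q(h)-\bigl(v_q(dx/dx_q)+1\bigr)$ is the $n=1$ instance of Proposition \ref{prop.valuations}, iterated; since the operators $\theta^j$ for $j\le n-1$ span the same space as the $x^i(d/dx)^i$ for $i\le n-1$, your $g$ is literally an instance of the paper's $F$. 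Where you genuinely diverge is in the construction step (the paper's Lemma \ref{lemma.linearcombination}): the paper obtains the coefficients $c_i$ nonconstructively, by solving an underdetermined linear system and then invoking the nonvanishing of the Wronskian $W(\alpha_1x^{a_1},\dots,\alpha_nx^{a_n})$ to see that the $x^{a_n}$-term survives, whereas you exploit the fact that $\theta=x\,d/dx$ acts diagonally on monomials to write down the combination explicitly as $\tilde P(\theta)f-\alpha_0\tilde P(0)$ with $\tilde P(t)=\prod_{k=1}^{n-1}(t-a_k)$, for which both the vanishing of the intermediate terms and the survival of the $x^{a_n}$-term are immediate. This buys an explicit formula and eliminates the Wronskian argument at no cost in generality. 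Two minor remarks: the ``transitional'' worry you flag in your Case C is vacuous, since the uniform inequality $v_q(\theta h)\ge v_q(h)-1-v_q(dx/dx_q)$ requires no case split on whether $h$ is regular; and your Case A silently strengthens the theorem, because points with $v_q(x)=1$ (hence $v_q(dx/dx_q)=0$) incur no $(n-1)$ penalty in your accounting, so you actually prove the bound with $\#S_1$ replaced by the possibly smaller $\#\{q\in S_1 : v_q(x)=0\}$.
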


As an immediate consequence of the above theorem we get an effective bound on $\limsup_{n\rightarrow \infty} a_n/n$. Namely we have
\[
\limsup_{n\rightarrow \infty} \frac{a_n}{n} \leq \#S_1 + \sum_{q\in S_2}\left(v_q\left(\frac{dx}{dx_q}\right)+1\right).
\]
Since $S_1$ is a subset of the set of poles of $f$, the limit superior is bounded by the number of poles of $f$ plus a term corresponding to the choice of local coordinates.

In the case that $C$ has genus $0$ Theorem \ref{theorem.main} is sharp.

\begin{example} Let $C = \mathbb{P}^1_\mathbb{C}$ with $K = \mathbb{C}(x)$ and $x$ our local parameter. Under this hypothesis $S_2$ is empty and Equation \ref{inequality.main} simplifies to
\[
a_n \leq h(f) + (n-1)\#S_1.
\]
For each pair of positive integers $k, m$ with $k > m$ we define the rational function
\[
f = 1+\frac{x^{k}}{1-x^m} = 1 + \sum_{n=1}^\infty x^{(n-1)m+k}.
\]
By construction $a_n = k+(n-1)m$ for all $n\geq 1$. However, $\#S_1 = m$ and $h(f) = k$, therefore $a_n = h(f) + (n-1)\#S_1$ as desired.
\end{example}

In the case that the genus is positive, it is not clear whether Theorem \ref{theorem.main} is sharp. No examples demonstrating sharpness are known to the author. In particular the term corresponding to the choice of local coordinates may be too large.

To prove Theorem \ref{theorem.main} we begin by constructing auxiliary rational functions that vanish to order $a_n$ for each positive integer $n$.

\begin{lemma}\label{lemma.linearcombination}
Let $x, f \in K$ and $p \in C$ be as in Theorem \ref{theorem.main}. Then for every positive integer $n$ there exist constants $c_i\in \mathbb{C}$, for $i=0, \ldots, n$, such that
\[
v_p\left(c_0 + c_1 f + c_2 x f' + c_3 x^2 f''+ \cdots + c_n x^{n-1} f^{(n-1)}\right) = a_n,
\]
where all derivatives are taken with respect to $x$.
\end{lemma}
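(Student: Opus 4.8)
The plan is to interpret the expression $c_0 + c_1 f + c_2 x f' + \cdots + c_n x^{n-1} f^{(n-1)}$ through the action of the Euler-type operators $x^k\frac{d^k}{dx^k}$, which are diagonal on the monomials appearing in the Taylor series. Indeed, for each $k\ge 0$ one has $x^k\frac{d^k}{dx^k}\,x^a = (a)_k\,x^a$, where $(a)_k = a(a-1)\cdots(a-k+1)$ is the falling factorial (with $(a)_0 = 1$). Writing $L = \sum_{k=0}^{n-1} c_{k+1}\, x^k\frac{d^k}{dx^k}$, applying $L$ to $f = \sum_{m\ge 0}\alpha_m x^{a_m}$ therefore yields $\sum_{m\ge 0}\alpha_m P(a_m)\,x^{a_m}$, where $P(t) = \sum_{k=0}^{n-1} c_{k+1}\,(t)_k$ is a \emph{single} polynomial in $t$ of degree at most $n-1$. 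The remaining constant $c_0$ only adjusts the coefficient of $x^{a_0}=x^0$.

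First I would note that, since the falling factorials $(t)_0,(t)_1,\ldots,(t)_{n-1}$ form a $\mathbb{C}$-basis of the polynomials of degree at most $n-1$, as $(c_1,\ldots,c_n)$ ranges over $\mathbb{C}^n$ the polynomial $P$ ranges over every polynomial of degree at most $n-1$. The valuation statement then becomes a polynomial interpolation problem: because each $\alpha_m\neq 0$ and the integers $a_1,\ldots,a_n$ are distinct, it suffices to produce a $P$ of degree at most $n-1$ with $P(a_m)=0$ for $1\le m\le n-1$ and $P(a_n)\neq 0$. I would simply take
\[
P(t) = \prod_{m=1}^{n-1}(t - a_m),
\]
which has degree exactly $n-1$, vanishes at $a_1,\ldots,a_{n-1}$, and satisfies $P(a_n) = \prod_{m=1}^{n-1}(a_n - a_m)\neq 0$. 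Expressing this $P$ in the falling-factorial basis recovers the required coefficients $c_1,\ldots,c_n$, and choosing $c_0 = -\alpha_0 P(0)$ cancels the contribution $\alpha_0 P(a_0)=\alpha_0 P(0)$ from the term $a_0=0$.

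With these choices the coefficient of $x^{a_m}$ vanishes for every $0\le m\le n-1$, while the coefficient of $x^{a_n}$ equals $\alpha_n P(a_n)\neq 0$, so the valuation is exactly $a_n$. I do not expect a genuine obstacle; the only point demanding care is to secure valuation \emph{equal} to $a_n$ rather than merely at least $a_n$, which is precisely why $P$ must be arranged to vanish at $a_1,\ldots,a_{n-1}$ but not at $a_n$, and this is guaranteed by the explicit product above.
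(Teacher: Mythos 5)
Your proof is correct, and while it starts from the same key observation as the paper's---that the Euler operators $x^k\,d^k/dx^k$ act diagonally on monomials with eigenvalue the falling factorial $(a)_k$, which is exactly what populates the columns of the paper's matrix $A$---you finish in a genuinely different and more constructive way. The paper obtains the coefficients nonconstructively: it takes a nonzero null vector $c$ of the truncated $n\times(n+1)$ system $B$, which exists by a dimension count, and then must separately verify that $Ac\neq 0$ by identifying the relevant minor with the Wronskian $W(\alpha_1 x^{a_1},\ldots,\alpha_n x^{a_n})$ and invoking linear independence of monomials of distinct degrees. You instead observe that, since $(t)_0,\ldots,(t)_{n-1}$ span all polynomials of degree at most $n-1$, choosing $c_1,\ldots,c_n$ amounts to prescribing an interpolation polynomial, and the explicit choice $P(t)=\prod_{m=1}^{n-1}(t-a_m)$ kills the terms $x^{a_1},\ldots,x^{a_{n-1}}$ while leaving the coefficient $\alpha_n P(a_n)=\alpha_n\prod_{m<n}(a_n-a_m)\neq 0$ of $x^{a_n}$; the constant $c_0=-\alpha_0 P(0)$ handles $a_0=0$. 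This trades the Wronskian computation for the trivial nonvanishing of a product of differences of distinct integers and produces the $c_i$ explicitly, which is arguably cleaner; the paper's dimension-count-plus-nonvanishing-minor scheme is the more robust template in situations where the operators do not act diagonally, but that robustness is not needed here. The only point worth making explicit in your write-up is that term-by-term differentiation of the expansion is legitimate, i.e., that $d/dx$ on $K$ is compatible with the induced derivation on the completion $\mathbb{C}[[x]]$; the paper relies on this implicitly as well.
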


\begin{proof}
We begin by considering the $(n+1)\times (n+1)$ matrix
\[
A  = \begin{pmatrix}
1 & \alpha_0 & 0 & \cdots & 0\\
0 & \alpha_1 x^{a_1} & \alpha_1 (a_1)_1 x^{a_1} & \cdots & \alpha_1 (a_1)_{n-1} x^{a_1}\\
0 & \alpha_2 x^{a_2} & \alpha_2 (a_2)_1 x^{a_2} & \cdots & \alpha_2 (a_2)_{n-1} x^{a_2}\\
\vdots & \vdots & \vdots & \ddots & \vdots\\
0 & \alpha_n x^{a_n} & \alpha_n (a_n)_1 x^{a_n} & \cdots & \alpha_n (a_n)_{n-1} x^{a_n}
\end{pmatrix},
\]
where the $i$-th column, for $i=2, \ldots, n+1$, is the first $n+1$ monomials of the power series expansion of $x^{i-1} f^{(i-1)}$. Similarly, we define $B$ to be the $n\times (n+1)$ matrix consisting of the first $n$ rows of $A$. Since $B$ represents an underdetermined system there exists some nonzero vector $c$ such that $Bc = 0$. We claim that the components of $c$ are the $c_i$ in the conclusion of our theorem. All that remains to be shown is that $Ac \neq 0$ guaranteeing that the $x^{a_n}$ term does not vanish in this sum. Let $A_{1,1}$ be the submatrix of $A$ after removing the first row and column. It suffices to show that $\det(A_{1,1}) \neq 0$. After factoring out $x^{i-1}$ from the $i$-th column of $A_{1,1}$, for $i=1, \ldots, n$, we have
\[
\det(A_{1,1}) = x^{n(n-1)/2} \det \begin{pmatrix}
\alpha_1 x^{a_1} & \alpha_1 (a_1)_1 x^{a_1-1} & \cdots & \alpha_1 (a_1)_{n-1} x^{a_1-(n-1)}\\
\alpha_2 x^{a_2} & \alpha_2 (a_2)_1 x^{a_2-1} & \cdots & \alpha_2 (a_2)_{n-1} x^{a_2-(n-1)}\\
\vdots & \vdots & \ddots & \vdots\\
\alpha_n x^{a_n} & \alpha_n (a_n)_1 x^{a_n-1} & \cdots & \alpha_n (a_n)_{n-1} x^{a_n-(n-1)}
\end{pmatrix}.
\]
However, the latter determinant is the Wronskian $W(\alpha_1 x^{a_1},\alpha_2 x^{a_2}, \ldots, \alpha_n x^{a_n})$. Since the entries of this Wronskian are monomials of strictly increasing degree they must be linearly independent. Therefore we conclude that $\det(A_{1,1})\neq 0$ and $Ac\neq 0$.
\end{proof}

We now proceed to bound from below the valuation of the derivatives of $f$ with respect to $x$.

\begin{prop}\label{prop.valuations}
Let $f, x\in K$ be nonconstant and $q\in C$ be arbitrary. Then for any non-negative integer $n$
\[
v_q\left(\frac{d^nf}{dx^n}\right) \geq v_q(f)-n\left(v_q\left(\frac{dx}{dx_q}\right)+1\right),
\]
where $x_q$ is a local parameter at $q$.
\end{prop}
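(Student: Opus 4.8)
The plan is to reduce the statement to a local computation in the completion at $q$ and then induct on $n$. Write $t=x_q$ for the local parameter and set $e=v_q(dx/dx_q)$. Since we are in characteristic zero, the nonconstant elements $x$ and $t$ are each separating, so $d/dx$ and $d/dt$ are well-defined $\mathbb{C}$-derivations of $K$, the module $\Omega_{K/\mathbb{C}}$ is one-dimensional over $K$, and $dx/dt=dx/dx_q$ is a nonzero element of $K$ with $v_q(dx/dt)=e$. The chain rule for derivations then gives, for every $g\in K$,
\[
\frac{dg}{dx}=\frac{dg/dt}{dx/dt}, \qquad\text{so}\qquad v_q\!\left(\frac{dg}{dx}\right)=v_q\!\left(\frac{dg}{dt}\right)-e.
\]

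First I would establish the key local estimate that differentiating by a local parameter lowers the valuation by at most one: for every $g\in K$,
\[
v_q\!\left(\frac{dg}{dt}\right)\ge v_q(g)-1.
\]
To see this, pass to the completion $\hat{K}_q\cong\mathbb{C}((t))$, in which $d/dt$ agrees with term-by-term differentiation (the continuous derivation extending $d/dt$ and sending $t\mapsto 1$ is unique, and term-by-term differentiation is such a derivation). Expanding $g=\sum_{i\ge m}a_i t^i$ with $a_m\neq 0$ and $m=v_q(g)$, we get $dg/dt=\sum_{i\ge m} i\,a_i t^{i-1}$. In characteristic zero the coefficient $m\,a_m$ of $t^{m-1}$ is nonzero when $m\neq 0$, giving $v_q(dg/dt)=m-1$; when $m=0$ the constant term is killed and $v_q(dg/dt)\ge 0>m-1$. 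Either way the claimed inequality holds (and the degenerate cases where $dg/dt=0$ are covered by the convention $v_q(0)=+\infty$).

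Combining the two displays shows that a single application of $d/dx$ lowers $v_q$ by at most $e+1$:
\[
v_q\!\left(\frac{dg}{dx}\right)\ge v_q(g)-(e+1).
\]
The proposition then follows by induction on $n$: the case $n=0$ is trivial, and applying the last inequality to $g=d^{\,n}f/dx^{n}\in K$ yields $v_q(d^{\,n+1}f/dx^{n+1})\ge v_q(d^{\,n}f/dx^{n})-(e+1)$, which chains to $v_q(d^{\,n}f/dx^{n})\ge v_q(f)-n(e+1)$, exactly the asserted bound.

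The computation is otherwise routine; the hard part will be the local estimate above—specifically the identification of the derivation $d/dt$ on $K$ with term-by-term differentiation on the completed local ring, and the use of characteristic zero to guarantee that the leading coefficient $m\,a_m$ does not vanish. Once that is in hand, the chain rule and the induction are immediate.
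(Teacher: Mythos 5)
Your proposal is correct and follows essentially the same route as the paper: an induction on $n$ whose inductive step combines the chain rule $v_q(dg/dx)=v_q(dg/dx_q)-v_q(dx/dx_q)$ with the fact that differentiating by a local parameter drops the valuation by at most one. The only difference is that you spell out the latter fact via term-by-term differentiation in the completion (where the paper simply asserts it), which is a welcome but not essentially different elaboration.
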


\begin{proof}
If $d^nf/dx^n$ is identically $0$ then the inequality trivially holds. Without further mention we will assume that all valuations are finite. We proceed by induction. If $n=0$ the result is immediate. Suppose now that $v_q\left(d^nf/dx^n\right) \geq v_q(f)-n(v_q(dx/dx_q)+1)$ for some $n\geq 0$. By the chain rule
\[
v_q\left(\frac{d^{n+1}f}{dx^{n+1}}\right) = v_q\left(\frac{d}{dx_q}\frac{df^n}{dx^n}\right)-v_q\left(\frac{dx}{dx_q}\right).
\]
Since $x_q$ is a local parameter at $q$, differentiating with respect to $x_q$ drops the order by at most $1$. Combining this observation with our induction hypothesis gives the inequality
\begin{align*}
v_q\left(\frac{d^{n+1}f}{dx^{n+1}}\right) &\geq \left(v_q(f)-n\left(v_q\left(\frac{dx}{dx_q}\right)+1\right)-1\right)-v_q\left(\frac{dx}{dx_q}\right)\\
&= v_q(f)-(n+1)\left(v_q\left(\frac{dx}{dx_q}\right)+1\right),
\end{align*}
which, by the principle of mathematical induction, completes the proof.
\end{proof}

\begin{remark}\label{remark.insupport}
If $v_q(x) \neq 0$, then  $v_q(dx/dx_q) = v_q(x)-1$. Under this additional hypothesis Proposition \ref{prop.valuations} simplifies to $v_q(d^nf/dx^n) \geq v_q(f)-nv_q(x)$.
\end{remark}

We now proceed to prove Theorem \ref{theorem.main}.

\begin{proof}[Proof of Theorem \ref{theorem.main}]
Let $n$ be a positive integer, by Lemma \ref{lemma.linearcombination} we can construct a rational function
\[
F = c_0 + c_1 f + c_2 x f' + c_3 x^2 f''+ \cdots + c_n x^{n-1} f^{(n-1)}
\]
such that $v_p(F) = a_n$. Since $a_n \leq h(F)$ it suffices to bound $h(F)$. To do so we consider four cases.

\emph{Case 1.} Let $q\in S_1 = \{q\in C : v_q(f) < 0 \text{ and } v_q(dx/dx_q) = 0\}$. It follows that $v_q(x) \geq 0$, and by Proposition \ref{prop.valuations}, for each $i=0,\ldots, n-1$, we have
\[
v_q\left(x^i\frac{d^if}{dx^i}\right) \geq v_q\left(\frac{d^if}{dx^i}\right) \geq v_q(f)-i.
\]
We next bound the height of $F$ over the set $S_1$.
\[
-\sum_{q\in S_1} \min\{v_q(F), 0\} \leq -\sum_{q\in S_1} \min\{v_q(f), 0\} + (n-1)\#S_1.
\]

\emph{Case 2.} Let $q\in S_2 = \{q \in C : v_q(dx/dx_q) > 0 \text{ and } v_q(x) = 0\}$. Then by Proposition \ref{prop.valuations}, for each $i=0,\ldots, n-1$,
\[
v_q\left(x^i\frac{d^if}{dx^i}\right) \geq v_q(f)-i\left(v_q\left(\frac{dx}{dx_q}\right)+1\right).
\]
The above inequality gives us the following bound on the height of $F$ over $S_2$.
\[
-\sum_{q\in S_2} \min\{v_q(F), 0\} \leq -\sum_{q\in S_2} \min\{v_q(f), 0\} + (n-1)\sum_{q\in S_2}\left(v_q\left(\frac{dx}{dx_q}\right)+1\right).
\]

\emph{Case 3.} Let $q\in S_3 = \{q\in C : v_q(f) \geq 0 \text{ and } v_q(dx/dx_q) = 0\}$. By Proposition \ref{prop.valuations}, for each $i=0,\ldots, n-1$,
\[
v_q\left(x^i\frac{d^if}{dx^i}\right) \geq 0.
\]
Therefore the height of $F$ over $S_3$ is $0$.

\emph{Case 4.} Let $q\in S_4 = \{q\in C : v_q(dx/dx_q) \neq 0 \text{ and } v_q(x) \neq 0\}$. By Remark \ref{remark.insupport}, for each $i=0, \ldots, n-1$,
\[
v_q\left(x^i\frac{d^if}{dx^i}\right) \geq v_q(f).
\]
Lastly we bound the height of $F$ over $S_4$.
\[
-\sum_{q\in S_4} \min\{v_q(F), 0\} \leq -\sum_{q\in S_4} \min\{v_q(f), 0\}.
\]

Combining the four cases we have
\begin{multline*}
-\sum_{q\in C}\min\{v_q(F), 0\} \leq -\sum_{q\in C} \min\{v_q(f), 0 \}\\
+(n-1)\#S_1+(n-1)\sum_{q\in S_2} \left(v_q\left(\frac{dx}{dx_q}\right)+1\right).
\end{multline*}
Therefore the desired inequality holds.
\end{proof}

The summation over $S_2$ in the conclusion of our theorem can be bounded above in terms of the genus and the support of $x$. In order to do so we need the following lemma.

\begin{lemma}\label{lemma.rr}
For any nonconstant $x\in K$
\[
\sum_{q\notin\Supp\{x\}} v_q\left(\frac{dx}{dx_q}\right) = \#\Supp\{x\}+2g-2.
\]
\end{lemma}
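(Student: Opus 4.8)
The plan is to recognize the left-hand sum as part of the degree of a canonical divisor and then to isolate the contribution coming from $\Supp\{x\}$. First I would observe that, writing $dx = (dx/dx_q)\,dx_q$ with $x_q$ a local parameter at $q$, the quantity $v_q(dx/dx_q)$ is exactly the order of the meromorphic differential $\omega = dx$ at $q$. Since $x$ is nonconstant (and we are in characteristic $0$), $\omega$ is not identically zero, so $\operatorname{div}(\omega) = \sum_{q\in C} v_q(dx/dx_q)\,[q]$ is a canonical divisor. The key external input, and the reason for the lemma's name, is that every canonical divisor has degree $2g-2$, a standard consequence of the Riemann--Roch theorem on a smooth complete curve. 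Hence
\[
\sum_{q\in C} v_q\!\left(\frac{dx}{dx_q}\right) = 2g-2.
\]

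Next I would split this total according to whether $q$ lies in $\Supp\{x\}$, so that
\[
\sum_{q\notin\Supp\{x\}} v_q\!\left(\frac{dx}{dx_q}\right) = (2g-2) - \sum_{q\in\Supp\{x\}} v_q\!\left(\frac{dx}{dx_q}\right).
\]
For each $q\in\Supp\{x\}$ we have $v_q(x)\neq 0$, so Remark \ref{remark.insupport} applies and gives $v_q(dx/dx_q) = v_q(x)-1$. Summing this over the (finite) support yields
\[
\sum_{q\in\Supp\{x\}} v_q\!\left(\frac{dx}{dx_q}\right) = \left(\sum_{q\in\Supp\{x\}} v_q(x)\right) - \#\Supp\{x\}.
\]

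Finally, since $v_q(x)=0$ for every $q\notin\Supp\{x\}$, the remaining sum $\sum_{q\in\Supp\{x\}} v_q(x)$ coincides with $\deg\operatorname{div}(x)$, which equals $0$ because principal divisors have degree zero. Substituting back gives
\[
\sum_{q\notin\Supp\{x\}} v_q\!\left(\frac{dx}{dx_q}\right) = (2g-2) - \bigl(0 - \#\Supp\{x\}\bigr) = \#\Supp\{x\}+2g-2,
\]
as claimed. I do not anticipate a genuine obstacle here: the only nonelementary ingredients are the two degree computations, namely that a canonical divisor has degree $2g-2$ and that a principal divisor has degree $0$, both of which may be cited as standard. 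The one point requiring care is applying Remark \ref{remark.insupport} precisely on $\Supp\{x\}$, where $v_q(x)\neq 0$ guarantees its hypothesis, and keeping the bookkeeping of the two complementary sums straight.
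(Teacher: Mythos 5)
Your proof is correct and follows essentially the same route as the paper: both rest on the fact that a canonical divisor has degree $2g-2$ together with the local identity $v_q(dx/dx_q)=v_q(x)-1$ at points of $\Supp\{x\}$. The only cosmetic difference is that the paper applies the degree formula to the logarithmic differential $\frac{1}{x}\,dx$, whose order is exactly $-1$ at each point of $\Supp\{x\}$, which lets it avoid your extra (also standard) appeal to $\deg\operatorname{div}(x)=0$.
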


\begin{proof}
By Riemann-Roch
\begin{align*}
2g-2 &= \sum_{q\in C} v_q\left(\frac{1}{x}\frac{dx}{dx_q}\right)\\
&= \sum_{q\in \Supp\{x\}}v_q\left(\frac{1}{x}\frac{dx}{dx_q}\right)+\sum_{q\notin\Supp\{x\}}v_q\left(\frac{dx}{dx_q}\right)\\
&= \sum_{q\in \Supp\{x\}}-1+\sum_{q\notin\Supp\{x\}}v_q\left(\frac{dx}{dx_q}\right),
\end{align*}
which, after rearranging terms, completes the proof.
\end{proof}

\begin{cor}\label{cor.simple}
Let $f, x\in K$ and $p\in C$ be as in the statement of Theorem \ref{theorem.main}. Then for all positive integers $n$
\[
a_n \leq h(f)+(n-1)\left(\#S_1+2(\#\Supp\{x\} +2g-2)\right).
\]
\end{cor}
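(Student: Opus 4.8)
The plan is to bound the term $\sum_{q \in S_2}\left(v_q\left(\frac{dx}{dx_q}\right)+1\right)$ that appears in the Main Theorem by the quantity $2(\#\Supp\{x\} + 2g - 2)$, after which the corollary follows immediately by substitution into inequality \eqref{inequality.main}. The key observation is that the set $S_2$ consists of points $q$ where $v_q(x) = 0$ (so $q \notin \Supp\{x\}$) and $v_q(dx/dx_q) > 0$. Thus the summation over $S_2$ of the ramification terms $v_q(dx/dx_q)$ is a partial sum of the larger quantity governed by Lemma \ref{lemma.rr}.

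First I would split the sum over $S_2$ into its two natural pieces, writing
\[
\sum_{q\in S_2}\left(v_q\left(\frac{dx}{dx_q}\right)+1\right) = \sum_{q\in S_2} v_q\left(\frac{dx}{dx_q}\right) + \#S_2.
\]
For the first piece, since every $q \in S_2$ satisfies $v_q(x) = 0$ and hence $q \notin \Supp\{x\}$, and since the ramification terms $v_q(dx/dx_q)$ are non-negative at such points, I can bound the $S_2$-sum by the full sum over all $q \notin \Supp\{x\}$, which Lemma \ref{lemma.rr} evaluates as $\#\Supp\{x\} + 2g - 2$. For the second piece, I would note that $S_2$ requires $v_q(dx/dx_q) > 0$, so $v_q(dx/dx_q) \geq 1$ for each $q \in S_2$; this lets me bound the cardinality $\#S_2$ by the same sum $\sum_{q \in S_2} v_q(dx/dx_q)$, which is again at most $\#\Supp\{x\} + 2g - 2$ by Lemma \ref{lemma.rr}.

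Combining the two bounds gives $\sum_{q\in S_2}\left(v_q\left(\frac{dx}{dx_q}\right)+1\right) \leq 2(\#\Supp\{x\} + 2g - 2)$, and substituting into the Main Theorem yields the corollary. The main subtlety to verify is that Lemma \ref{lemma.rr} indeed controls the sum over \emph{all} points outside $\Supp\{x\}$ and that every ramification contribution there is non-negative, so that restricting to the subset $S_2$ can only decrease the total; I would confirm that $v_q(dx/dx_q) \geq 0$ automatically holds whenever $v_q(x) = 0$, since $x$ is then a unit in the local ring and $dx/dx_q = x' $ cannot have a pole at an unramified point of the function $x$. Given Lemma \ref{lemma.rr}, no genuine obstacle remains; the argument is a clean combinatorial consequence of the Riemann-Roch computation.
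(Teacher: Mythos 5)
Your proof is correct and follows essentially the same route as the paper: both bound $\sum_{q\in S_2}\bigl(v_q(dx/dx_q)+1\bigr)$ by $2\sum_{q\in S_2}v_q(dx/dx_q)$ (your two-piece split using $\#S_2\leq\sum_{q\in S_2}v_q(dx/dx_q)$ is the same inequality), then identify this with the sum over all $q\notin\Supp\{x\}$ and apply Lemma \ref{lemma.rr}. No substantive difference.
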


\begin{proof}
Recall that $S_2 = \{q \in C : v_q(x) = 0 \text{ and } v_q(dx/dx_q) > 0\}$. Then we have
\begin{align*}
\sum_{q\in S_2} \left(v_q\left(\frac{dx}{dx_q}\right)+1\right) &\leq 2\sum_{q\in S_2}v_q\left(\frac{dx}{dx_q}\right)\\
&=2\sum_{q\notin \Supp\{x\}} v_q \left(\frac{dx}{dx_q}\right).
\end{align*}
By Lemma \ref{lemma.rr} the right hand side is precisely $2(\#\Supp\{x\}+2g-2)$. Substituting this value into Equation \ref{inequality.main} gives the desired result.
\end{proof}

\bibliographystyle{abbrv}
\bibliography{sparsealgebraic}

\begin{thebibliography}{1}

\bibitem{Christol1980}
G.~Christol, T.~Kamae, M.~{Mend{\`e}s France}, and G.~Rauzy.
\newblock Suites alg{\'e}briques, automates et substitutions.
\newblock {\em Bull. Soc. Math. France}, 108:401--419, 1980.

\bibitem{denef87}
J.~Denef and L.~Lipshitz.
\newblock {A}lgebraic power series and diagonals.
\newblock {\em J. Number Theory}, 26:46--67, 1987.

\bibitem{fabry96}
E.~Fabry.
\newblock {S}ur les points singuliers d'une fonction donn{\'e}e par son
  d{\'e}veloppement en s{\'e}rie et l'impossibilit{\'e} du prolongment
  analytique dans des cas tr{\`e}s g{\'e}n{\'e}raux.
\newblock {\em Ann. Sci. Ecole Norm. Sup.}, 13:367--399, 1896.

\bibitem{mahler56}
K.~Mahler and J.~W.~S. Cassels.
\newblock {O}n the {T}aylor coefficients of rational functions.
\newblock {\em Math. Proc. Cambridge Philos. Soc.}, 52:39--48, 1956.

\end{thebibliography}

\end{document}